\definecolor{hanblue}{rgb}{0.27, 0.42, 0.81}
\newcommand{\tr}[1]{tr(C(1)^{-1})}
\newcommand{\vertiii}[1]{{\left\vert\kern-0.25ex\left\vert\kern-0.25ex\left\vert #1
		\right\vert\kern-0.25ex\right\vert\kern-0.25ex\right\vert}}
\DeclareFontFamily{OMX}{MnSymbolE}{}
\DeclareSymbolFont{MnLargeSymbols}{OMX}{MnSymbolE}{m}{n}
\DeclareFontShape{OMX}{MnSymbolE}{m}{n}{
	<-6>  MnSymbolE5
	<6-7>  MnSymbolE6
	<7-8>  MnSymbolE7
	<8-9>  MnSymbolE8
	<9-10> MnSymbolE9
	<10-12> MnSymbolE10
	<12->   MnSymbolE12
}{}
\DeclareFontShape{OMX}{MnSymbolE}{b}{n}{
	<-6>  MnSymbolE-Bold5
	<6-7>  MnSymbolE-Bold6
	<7-8>  MnSymbolE-Bold7
	<8-9>  MnSymbolE-Bold8
	<9-10> MnSymbolE-Bold9
	<10-12> MnSymbolE-Bold10
	<12->   MnSymbolE-Bold12
}{}
\let\llangle\@undefined
\let\rrangle\@undefined
\DeclareMathDelimiter{\llangle}{\mathopen}%
{MnLargeSymbols}{'164}{MnLargeSymbols}{'164}
\DeclareMathDelimiter{\rrangle}{\mathclose}%
{MnLargeSymbols}{'171}{MnLargeSymbols}{'171}
\newcommand*\rel@kern[1]{\kern#1\dimexpr\macc@kerna}
\newcommand*\widebar[1]{%
	\begingroup
	\def\mathaccent##1##2{%
		\rel@kern{0.8}%
		\overline{\rel@kern{-0.8}\macc@nucleus\rel@kern{0.2}}%
		\rel@kern{-0.2}%
	}%
	\macc@depth\@ne
	\let\math@bgroup\@empty \let\math@egroup\macc@set@skewchar
	\mathsurround\z@ \frozen@everymath{\mathgroup\macc@group\relax}%
	\macc@set@skewchar\relax
	\let\mathaccentV\macc@nested@a
	\macc@nested@a\relax111{#1}%
	\endgroup
}
\numberwithin{equation}{section}
\definecolor{darkred}{rgb}{.7,0,0}
\definecolor{green}{rgb}{0,0.7,0}
\theoremstyle{plain}
\newtheorem{theorem}{Theorem}[section]
\newtheorem{proposition}[theorem]{Proposition}
\theoremstyle{definition}
\newtheorem{defs}[theorem]{Definition}
\newtheorem{rem}[theorem]{Remark}
\title{Exact Sparse Representation Recovery in Signal Demixing and Group BLASSO
\footnotetext{2020 Mathematics Subject Classification: 46A55, 49K27, 49N15, 49Q22, 52A40, 54E35}
\footnotetext{Keywords: Choquet theory, convex optimization, duality, extreme points, metric space, sparsity, stability}}
\author{Marcello Carioni \!\!\thanks{Department of Applied Mathematics, University of Twente, 7500AE Enschede, The Netherlands \\
(\texttt{m.c.carioni@utwente.nl}, \texttt{l.delgrande{@}utwente.nl})} , Leonardo Del Grande\footnotemark[1]
}
\date{}
\begin{document}
 
	
	

	\maketitle
\begin{abstract}
\noindent
In this short article we present the theory of sparse representations recovery in convex regularized optimization problems introduced in \cite{carioni2023general}. We focus on the scenario where the unknowns belong to Banach spaces and measurements are taken in Hilbert spaces, exploring the properties of minimizers of optimization problems in such settings. Specifically, we analyze a Tikhonov-regularized convex optimization problem, where $y_0$
  are the measured data, $w$ denotes the noise, and $\lambda$ is the regularization parameter. By introducing a Metric Non-Degenerate Source Condition (MNDSC) and considering sufficiently small $\lambda$ and $w$, we establish Exact Sparse Representation Recovery (ESRR) for our problems, meaning that the minimizer is unique and precisely recovers the sparse representation of the original data. We  then emphasize the practical implications of this theoretical result through two novel applications: signal demixing and super-resolution with Group BLASSO.  
These applications underscore the broad applicability and significance of our result, showcasing its potential across different domains.
\end{abstract}
\maketitle                   

\section{Introduction}
In \cite{carioni2023general}, we introduced a general theory for the recovery of the sparse representation of data in infinite-dimensional convex optimization problems. More precisely, given a Banach space $X$, a linear operator $K:X\rightarrow Y$ mapping to an Hilbert space $Y$, and a convex regularizer $R$, we  analyzed  the sparsity for minimizers of the following Tikhonov-regularized optimization problem:
\begin{equation}\label{eq:ip}
    \inf_{u\in X} \frac{1}{2}\|Ku-y_0 - w\|_Y^2+ \lambda R(u) \tag{$\mathcal{P}_{\lambda}(y_0+w)$},
\end{equation}
where $\lambda > 0$ is the regularization parameter and $w \in Y$ is the noise assumed to be deterministic. In the main result of \cite{carioni2023general}, we showed that under a certain condition on the sparse solution $u_0 \in X$ to the hard-constrained problem without noise:
\begin{equation} \label{int1}
	\inf_{u\in X: Ku=y_0} R(u)
 \tag{$\mathcal{P}_{h}(y_0)$}, 
\end{equation} 
the minimizer for \ref{eq:ip} is unique and precisely recovers the sparse representation of $u_0$ in case of small regularization parameter and in a low-noise regime, achieving \emph{Exact Sparse Representation Recovery (ESRR)}. 
ESRR provides important insights on the \emph{sparse stability} of convex regularized optimization problems when measurements are corrupted by noise. Moreover, it positions itself in the emerging theory of infinite-dimensional sparsity, providing a general recipe for studying the sparse representation of minimizers in specific optimization problems that are relevant for applications. 
\\
Prior to \cite{carioni2023general}, ESRR had been only established for optimization problems in the space of measures regularized with the total variation norm \cite{duval2015exact}. Other than that very few results are available and they only address few specific optimization problems \cite{de2023exact}.
The theory introduced in \cite{carioni2023general} is the first step towards a comprehensive theory for the exact sparse representation recovery in general infinite-dimensional convex optimization problems. 
\\
In this short paper we will summarize the approach of \cite{carioni2023general} to ESRR, highlighting the important ideas behind it. In particular, we will put emphasis on the suitable non-degeneracy assumption on the solution to \ref{int1} that is necessary to ensure ESRR. Such condition, named Metric Non-Degenerate Source Condition (MNDSC) is based on the geometric structure of the regularizer $R$ and extends the classical Non-Degenerate Source Condition (NDSC) for optimization problems in the space of measures regularized with total variation norm \cite{duval2015exact}.
We complement the paper by proposing two novel applications of our general theory: signal demixing and super-resolution with Group BLASSO regularization. Such examples are instrumental to highlight the applicability potential of   \cite{carioni2023general}, showing that many problems can be studied using the general framework introduced in \cite{carioni2023general}.

\section{Convex regularized optimization problems}

Convex regularized optimization problems are often motivated as Tikhonov-type approach to inverse problems. 
Given a linear weak*-to-weak operator $K : X\rightarrow Y$ from a Banach space $X$ to an Hilbert space $Y$, a classical inverse problem task is to reconstruct a data $u \in X$ from measurements $y \in Y$ modelled as
\begin{align}
    y = Ku + w,
\end{align}
where $w \in Y$ is the noise level, typically assumed as deterministic. Since $K$ is often non-injective and ill-conditioned, constructing an inverse of $K$ that is stable with respect to  noise is problematic. Therefore, the classical approach of Tikhonov-regularized inverse problem is to solve \ref{eq:ip},
where $R : X \rightarrow [0,+\infty]$ is a suitable functional, called regularizer, that is responsible both for increasing the stability of the optimization problem and selecting desired solutions of the inverse problem. In our scenario, $R$ is a convex, weak* lower semi-continuous and positively 1-homogeneous functional, i.e. $R(\lambda u)=\lambda R(u)$ for every $\lambda \geqslant 0$. Moreover, the sublevel set
$
S^{-}(R, \alpha):=\{u \in X: R(u) \leqslant \alpha\}
$
is weak* compact for every $\alpha \geqslant 0$ and 
$0$ is an interior point of $\partial R(0)$, where $\partial R$ denotes the subdifferential of $R$.

\subsection{Sparsity enforcing regularizers}

It has been shown that the regularizer $R$ could be chosen to promote sparse solutions to \ref{eq:ip}. In particular, if $X$ is finite dimensional and $R$ is chosen to be the $1$-norm, then solutions to \ref{eq:ip}  made of vectors with few non-zero entries are favoured. This phenomena has been thoroughly studied in compressed sensing theory \cite{candes2006robust}. If $X$ is the space of Radon measures $M(\Omega)$, where $\Omega$ is a closed subset of $\mathbb{R}^d$,  and $R$ is chosen to be the total variation norm, then it has been observed that \ref{eq:ip} promotes solutions that are linear combinations of Dirac deltas: 
\begin{align}
    \mu = \sum_{i=1}^n c_i \delta_{x_i} \quad c_i \in \mathbb{R}, \quad x_i \in \Omega.
\end{align}
As pointed out in the introduction, a suitable concept of sparsity was recently introduced for general convex regularizers $R : X \rightarrow [0,\infty]$ defined on the Banach space $X$. In particular, in \cite{boyer2019representer,bredies2020sparsity} it has been shown that sparse solutions can be naturally defined as linear combinations of extreme points of the unit ball of $R$, denoted by $B = \{u \in X : R(u) \leqslant1\}$, meaning elements of the Banach space $X$ that admits the representation
\begin{align}\label{eq:repre}
    u = \sum_{i=1}^n c_iu_i\quad c_i\in \mathbb{R},\ \  u_i\in \operatorname{Ext}(B),
\end{align}
where $\operatorname{Ext}(B)$ denotes the set of extreme points of $B$. 
This notion of sparse solutions has been justified by showing that, in case the measurement operator $K$ maps to a finite-dimensional space, then it is always possible to find a sparse solution for \ref{eq:ip}. These results are known as \emph{representer theorems}. 
Despite the importance of the representer theorem, in many situations, knowing the existence of a sparse solution is not enough since, due to the non-injectivity of $K$ and the lack of strict convexity of $R$, \ref{eq:ip} has typically many minimizers. Therefore, it is natural to ask if, under suitable assumptions, the solution to \ref{eq:ip} can be represented uniquely as in \eqref{eq:repre} at least for small values of $\lambda$ and in a low noise regime. If this holds, then we say that the \emph{Exact Sparse Representation Recovery} (ESRR) holds.

\section{Metric Non-Degenerate Source Condition}
\subsection{Duality and optimality conditions}
Before introducing the Metric Non-Degenerate Source Condition we recall the definition of a fundamental object that we will extensively use in the following: the so called \emph{dual certificate}.
First, note that the Fenchel dual problem associated with \ref{int1} reads as
\begin{align}\label{eq:dual}
    \sup _{p \in Y: K_* p \in \partial R(0)}\left(y_0, p\right)  \tag{$\mathcal{D}_{h}(y_0)$}
\end{align}
and, under the assumptions on $R$ and $K$ mentioned above, strong duality between \ref{int1} and \ref{eq:dual} holds. Moreover, one can prove that the existence of $u_0 \in X$ solution to \ref{int1} and $p_0 \in Y$ solution to \ref{eq:dual} is equivalent to the following \emph{optimality conditions}:
\begin{align}
    \left\{\begin{aligned}
K_* p_0 & \in \partial R\left(u_0\right), \\
K u_0 & =y_0.
\end{aligned}\right.
\end{align}
Therefore, if we find an element $\eta_0\in  \partial R\left(u_0\right)$ such that it satisfies $\eta_0=K_*p_0$, then it holds that $u_0$ is a solution to \ref{int1}. Hence,  following the definition proposed in \cite{duval2015exact}, we  call $\eta_0$ a dual certificate for $u_0$. Similar optimality conditions can be also obtained for \ref{eq:ip} and $\tilde \eta_{\lambda}$ denotes the dual certificate associated with $\tilde u_{\lambda}$, solution to \ref{eq:ip}. Since, in general, dual certificates for \ref{int1} are not unique, in the following we will consider the one with the minimal norm in $Y$.
\begin{defs}[Minimal-norm dual certificate]\label{def:dualcert}
   The minimal-norm dual certificate associated with \ref{int1} is defined as $\eta_0=K_* p_0$, where $p_0 \in Y$ is the unique solution to \ref{eq:dual} with minimal $\|\cdot\|_Y$ norm:
$$
p_0=\operatorname{argmin}\left\{\|p\|_Y: p \in Y \text { is a solution to } \mathcal{D}_h\left(y_0\right)\right\}.
$$
\end{defs}
\subsection{Non-degeneracy}
In \cite{duval2015exact}, it has been noted that for deconvolution problems in the space of Radon measures regularized with the total variation norm, a suitable non-degeneracy assumption on the dual certificate provides ESRR (or Exact Support Recovery recalling the denomination in \cite{duval2015exact}).
Such assumption is known as Non-Degeneracy Source Condition (NDSC). 
Given a sparse measure $\mu_0 = \sum_{i=1}^n c_0^i \delta_{x_0^i}$ such that $K\mu_0 = y_0$, the NDSC is divided in two parts. The first part, composed of the \emph{Source Condition} $(1)$ and the localization condition $(2)$, requires $\eta_0 \in C(\mathbb{T})$ to satisfy 
\begin{align}\label{eq:firsttwo}
    \operatorname{Im} K_* \cap \partial\left\|\mu_0\right\|_{M(\mathbb{T})} \neq \emptyset, \qquad \arg \max _x\left|\eta_0(x)\right|=\left\{x_0^1, \ldots, x_0^n\right\}.
\end{align}
Then, to promote uniqueness of the locations and the coefficients one also has to ensure a non-degeneracy on the second derivative of $\eta_0$, that is 
\begin{align}\label{eq:second}
\eta_0^{\prime \prime}\left(x_0^i\right) \neq 0, \quad \forall i=1, \ldots, n.
\end{align}
In \cite{carioni2023general}, we have extended the NDSC to general convex optimization problems by looking at how the dual certificate $\eta_0 \in X_*$ behaves when tested against extreme points of $B = \{u\in X : R(u) \leqslant 1\}$. In particular, we imposed conditions on the duality product 
\begin{align*}
    u \mapsto \langle \eta_0, u\rangle,
\end{align*}
when $u$ belongs to the extreme points of $B$. One can observe that in the case studied in \cite{duval2015exact} the extreme points of the total variation norm are Dirac deltas and thus the duality product $\langle \eta_0, u\rangle$ reduces to the pointwise evaluation of $\eta_0$. However, for general optimization problems this is not the case.
Moreover, in our setting is it not straightforward to impose a second order condition on $u \mapsto \langle \eta_0, u\rangle$ since the set of extreme points has a priori no differential structure. We thus had 
 to find a way to define a suitable non-degeneracy condition for $u \mapsto \langle \eta_0, u\rangle$ seen as a real-valued map from the metric space $\mathcal{B}:=\overline{\operatorname{Ext}(\{u \in X: R(u) \leqslant 1\})}^*$, with the metric $d_{\mathcal{B}}$ metrizing the weak* convergence on $\mathcal{B}$. This justifies the name Metric Non-Degenerate Source Condition (MNDSC).
Given $u_0 = \sum_{i=1}^n c_0^i u_0^i$ such that $Ku_0 = y_0$, the first two conditions \eqref{eq:firsttwo} are simply rewritten in our context using the duality product $\langle \eta_0, u\rangle$, that is
\begin{enumerate}
    \item[1)] $\operatorname{Im} K_* \cap \partial R\left(u_0\right) \neq \emptyset$;
    \item[2)] $\left\{u_0^1, \ldots, u_0^n\right\}=\operatorname{Exc}\left(u_0\right)$,
\end{enumerate}
where $\operatorname{Exc}\left(u_0\right):=\left\{u \in \mathcal{B}:\left\langle\eta_0, u\right\rangle=1\right\}$ is called \emph{extreme critical set} (it replaces the extended support in \cite{duval2015exact}). Instead, the third condition \eqref{eq:second} is rephrased using continuous curves in the metric space $\mathcal{B}$ that belongs to the following set for a fixed $M>0$:
\begin{align*}
\Gamma_M=\left\{\gamma \in C([0,1], \mathcal{B}): t \mapsto K(\gamma(t)) \text { is } C^2((0,1)) \text { and } \sup _{t \in[0,1]}\left\|\frac{d^2}{d t^2} K(\gamma(t))\right\|_Y \leqslant M\right\} .
\end{align*}
Precisely, we require the following non-degeneracy condition:
\begin{enumerate}
    \item[3)]  there exist $\varepsilon, \delta>0$ such that for any two elements in $B_{\varepsilon}(u_0^i):=\{u \in \mathcal{B}: d_{\mathcal{B}}(u_0^i, u) \leqslant \varepsilon\}$, there exists a curve $\gamma:[0,1] \rightarrow B_{\varepsilon}(u_0^i)$ belonging to $\Gamma_M$, connecting them, that satisfies:
\begin{align*}
    \frac{d^2}{d t^2}\left\langle\eta_0, \gamma(t)\right\rangle<-\delta \quad \forall t \in(0,1).
\end{align*}
\end{enumerate}
Let us notice that this condition must be satisfied at every point along such a curve, which is in line with the lack of differentiability of $\mathcal{B}$, making our condition not well-defined pointwise. The MNDSC comprises the conditions $1)$, $2)$ and $3)$. Let us remark that non-degeneracy assumptions have been used in the context of convex regularized optimization problems to prove convergence of infinite-dimensional sparse algorithms \cite{bredies2023asymptotic}. Our MNDSC can be seen as a version of it designed specifically for ESRR.

\section{Exact sparse representation recovery}
We now describe the main result in \cite{carioni2023general}, that we named, as mentioned above, \emph{Exact Sparse Representation Recovery} (ESRR). The name is justified since the solution  $\tilde u_{\lambda}$ to \ref{eq:ip} recovers the sparse representation of the original solution $u_0$ to \ref{int1}, with exactly the same number of extreme points. 
We consider the following set of admissible parameters/noise levels for $\lambda_0>0$ and $\alpha>0:$
\begin{align*}    
N_{\alpha, \lambda_0}=\left\{(\lambda, w) \in \mathbb{R}_{+} \times Y: 0 \leqslant \lambda \leqslant \lambda_0 \quad \text { and } \quad\|w\|_Y \leqslant \alpha \lambda\right\}.
\end{align*}
Then, the following result holds.
\begin{theorem}[ESRR]\label{ESRR}
 Let $u_0 = \sum_{i=1}^n c_0^i u_0^i$ with $c_0^i >0$ and 
$u_0^i \in \mathcal{B} \setminus \{0\}$ satisfy the MNDSC. Suppose that $\{Ku_0^i\}_{i=1}^n$
are linearly independent. Then, for $\varepsilon>0$ small enough, there exist suitable values $\alpha>0, \lambda_0>0$ such that, for every $(\lambda, w) \in N_{\alpha, \lambda_0}$, the solution $\tilde{u}_\lambda$ to \ref{eq:ip} is unique and admits a unique representation of the following form: 
\begin{align}
    \tilde{u}_\lambda=\sum_{i=1}^n \tilde{c}_\lambda^i \tilde{u}_\lambda^i,
\end{align}
where $\tilde{u}_\lambda^i \in B_{\varepsilon}(u_0^i) \backslash\{0\}$ such that $\left\langle\tilde{\eta}_\lambda, \tilde{u}_\lambda^i\right\rangle=1$,  $\tilde{c}_\lambda^i>0$ and $|\tilde c_\lambda^i - c_0^i| \leqslant \varepsilon$ for every $i = 1,\ldots,n$.
\end{theorem}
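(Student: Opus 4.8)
The plan is to adapt the strategy of Duval--Peyr\'e \cite{duval2015exact} to the metric-space setting of $\mathcal{B}$, proceeding in four stages: (i) a convergence result for the minimal-norm dual certificate; (ii) the construction of a candidate minimizer of \ref{eq:ip} supported near $u_0^1,\dots,u_0^n$; (iii) verification that the candidate satisfies the optimality conditions; and (iv) uniqueness. For stage (i), I would show that as $(\lambda,w)\to(0,0)$ in $N_{\alpha,\lambda_0}$, the dual variable $\tilde p_\lambda=(y_0+w-K\tilde u_\lambda)/\lambda$ converges in $Y$ to $p_0$, so that the certificate $\tilde\eta_\lambda=K_*\tilde p_\lambda$ converges to $\eta_0$ strongly enough to transfer non-degeneracy: uniformly on $\mathcal{B}$ one gets $\langle\tilde\eta_\lambda,\cdot\rangle\to\langle\eta_0,\cdot\rangle$, and along any curve $\gamma\in\Gamma_M$ one gets $\frac{d^2}{dt^2}\langle\tilde\eta_\lambda,\gamma(t)\rangle\to\frac{d^2}{dt^2}\langle\eta_0,\gamma(t)\rangle$ uniformly in $t$, using the uniform $C^2$-bound on $t\mapsto K\gamma(t)$. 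This is the analogue of the precertificate convergence in \cite{duval2015exact}, and it is where the source condition~1) enters.

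Next, fix $\varepsilon>0$ small enough that the balls $B_{\varepsilon}(u_0^i)$ are pairwise disjoint, that MNDSC condition~3) holds on each of them, and that $\{Kv_i\}_{i=1}^n$ stays linearly independent whenever $v_i\in B_{\varepsilon}(u_0^i)$. By stage~(i), choosing $\lambda_0,\alpha$ small makes the strict concavity $\frac{d^2}{dt^2}\langle\tilde\eta_\lambda,\gamma(t)\rangle<-\delta/2$ along curves in $\Gamma_M$ persist, so that $v\mapsto\langle\tilde\eta_\lambda,v\rangle$ admits a unique local maximizer $\tilde u_\lambda^i$ in $B_{\varepsilon}(u_0^i)$, which depends continuously on $(\lambda,w)$ with $\tilde u_\lambda^i\to u_0^i$; this continuity replaces the implicit function theorem, since a subsequential limit distinct from $u_0^i$ could be joined to $u_0^i$ by a curve in $\Gamma_M$ along which strict concavity contradicts the maximality of $u_0^i$ for $\langle\eta_0,\cdot\rangle$. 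Inserting the ansatz $u=\sum_{i=1}^n c^i\tilde u_\lambda^i$ into the optimality system for \ref{eq:ip} and using linear independence of $\{K\tilde u_\lambda^i\}$ to invert the resulting Gram-type system yields coefficients $\tilde c_\lambda^i$, continuous in $(\lambda,w)$, with $\tilde c_\lambda^i\to c_0^i$; hence $\tilde c_\lambda^i>0$ and $|\tilde c_\lambda^i-c_0^i|\le\varepsilon$ once $\lambda_0,\alpha$ are small. One then checks that the certificate generated by $\tilde u_\lambda=\sum_i\tilde c_\lambda^i\tilde u_\lambda^i$ is indeed $\tilde\eta_\lambda$ (consistency of the construction) and that $\tilde\eta_\lambda\in\partial R(\tilde u_\lambda)$, i.e.\ $\langle\tilde\eta_\lambda,v\rangle\le1$ for all $v\in\mathcal{B}$: off $\bigcup_iB_{\varepsilon}(u_0^i)$ this follows from condition~2), $\operatorname{Exc}(u_0)=\{u_0^1,\dots,u_0^n\}$ (so $\langle\eta_0,v\rangle<1$ there) together with the uniform convergence of stage~(i), and inside the balls from the local maximality just established. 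Thus $\tilde u_\lambda$ is a minimizer of \ref{eq:ip} of the asserted form.

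For uniqueness, let $\bar u$ be any minimizer of \ref{eq:ip}: it shares the dual certificate $\tilde\eta_\lambda$, so every extreme point appearing in a representation of $\bar u$ must lie in $\{v\in\mathcal{B}:\langle\tilde\eta_\lambda,v\rangle=1\}$, which by the previous paragraph equals $\{\tilde u_\lambda^1,\dots,\tilde u_\lambda^n\}$. Hence $\bar u=\sum_i\bar c^i\tilde u_\lambda^i$, and linear independence of $\{K\tilde u_\lambda^i\}$ together with the optimality relation fixing $K\bar u$ forces $\bar c^i=\tilde c_\lambda^i$; therefore $\bar u=\tilde u_\lambda$ and the representation is unique.

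The main obstacle is the interplay of stages (i) and (ii): one has to replace the classical second-order argument (the implicit function theorem applied to $\eta'=0$, $\eta''\neq0$) by the curve-based condition~3), which is not a pointwise statement and controls $\langle\eta_0,\cdot\rangle$ on $\mathcal{B}$ only through the smoothing map $\gamma\mapsto K\gamma$. One must show that strict concavity along every admissible curve nonetheless yields a well-defined, locally unique and continuously varying maximizer $\tilde u_\lambda^i$, and that this structure is stable under the perturbation $\eta_0\rightsquigarrow\tilde\eta_\lambda$; carrying this out rigorously without a differentiable structure on $\mathcal{B}$ is the technical heart of the proof.
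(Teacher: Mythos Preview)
The paper does not contain a proof of Theorem~\ref{ESRR}. This is a survey/announcement article: the theorem is quoted as the main result of \cite{carioni2023general}, and the surrounding text only comments on it (noting that it extends \cite{duval2015exact} and explaining why the $O(\lambda)$ rate is lost because the implicit function theorem is unavailable on $\mathcal{B}$). The only proofs written out here are those of Theorems~\ref{thm:spikes} and~\ref{thm:spikes2}, which \emph{apply} Theorem~\ref{ESRR} rather than prove it. So there is nothing in this paper to compare your proposal against.

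That said, your four-stage outline is fully consistent with the hints the paper gives: the strategy is explicitly described as an adaptation of Duval--Peyr\'e, and the paper stresses that the second-order step must be handled without differentiability on $\mathcal{B}$, via the curve-based condition~3), exactly as you propose in stage~(ii). Your identification of the ``technical heart'' (stability of the unique local maximizer under the perturbation $\eta_0\rightsquigarrow\tilde\eta_\lambda$, argued through curves in $\Gamma_M$ rather than the implicit function theorem) matches the paper's remark on why only $|\tilde c_\lambda^i-c_0^i|\leqslant\varepsilon$ and $\tilde u_\lambda^i\in B_\varepsilon(u_0^i)$ are obtained instead of a rate. One point worth tightening: in your stage~(ii) you invoke $\tilde\eta_\lambda$, which is defined through the (as yet unknown) minimizer $\tilde u_\lambda$, to \emph{construct} a candidate; make explicit whether you are (a) first proving existence of some minimizer by compactness, then analyzing its structure via its certificate, or (b) running a genuine fixed-point/self-consistency argument. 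Either route is viable, but as written the logic between ``construct a candidate'' and ``check the certificate it generates is $\tilde\eta_\lambda$'' is slightly circular. To confirm the details you will need to consult \cite{carioni2023general} directly.
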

\noindent
 This result naturally extends the one in \cite{duval2015exact} with the only caveat that the decay rate $O(\lambda)$ obtained in \cite{duval2015exact} for the coefficients and the positions is not achieved in our scenario. This follows from the fact that since we assume no differentiability structure on $\mathcal{B}$, we cannot apply the implicit function theorem to both the coefficients and the locations as done in \cite{duval2015exact}. As a consequence, we cannot estimate the rate of change of the extreme points with respect to $\lambda$ and $w$, preventing us to prove the decay rate $O(\lambda)$.

\begin{rem}
  It is worth to notice that the geometry of $\mathcal{B}$ highly influences the MNDSC. In particular, it is immediate to see that if an extreme point of the representation of $u_0^i$ is isolated in $\mathcal{B}$, then condition 3) in the MNDSC is automatically satisfied. This links our theory to the finite dimensional case, with $R(u) = \|u\|_1$, where all extreme points are isolated and no non-degeneracy is required. Building on this remark, it is clear that a non-degeneracy needs to be imposed only along curves connecting extreme points that are arbitrarily close (in the weak* topology) to $u_0^i$. Depending on the geometry of $\mathcal{B}$, this fact can simplify the non-degeneracy that one needs to impose. 
\end{rem}
\noindent
In the next sections, we will apply our theorem to  two examples where the considerations of the previous remark will play a crucial role.


\section{Demixing spikes and kernels}

Here we consider the applications of Theorem \ref{ESRR} to a variant of the setting introduced in \cite{fernandez2016super}, c.f. Section 3.1. 
The aim is to reconstruct from a finite-dimensional measurement a signal defined on $\mathbb{T}$ that is composed by the superposition of convolution kernels and spikes, and to demix the kernel component from the spike component. 
In practice, we consider the following setting. Given
$X = M(\mathbb{T}, \mathbb{R}) \times \mathbb{R}^N$
that is the dual of $X_* = C(\mathbb{T}, \mathbb{R})\times  \mathbb{R}^N$, we define the  optimization problem:
\begin{align}\label{eq:spikes}
\min_{(\mu,\zeta) \in X} \frac{1}{2}\sum_{i=1}^N \Big|\int_{\mathbb{T}} \varphi_i(x) \, d\mu(x) + \zeta_i - (y_i+w_i)\Big|^2 + \lambda R(\mu,\zeta),
\end{align}
where $y =(y_1, \ldots,y_N) \in \mathbb{R}^N$ are the measurements, $w =(w_1, \ldots,w_N) \in \mathbb{R}^N$ is the noise, $\varphi_i : \mathbb{T} \rightarrow \mathbb{R}$ are the kernels of class $C^2(\mathbb{T})$, and the regularizer $R(\mu,\zeta)$ is defined as 
\begin{align*}
R(\mu,\zeta) = \|\mu\|_{TV} + \|\zeta\|_1,\quad  \mu\in  M(\mathbb{T}, \mathbb{R}), \zeta\in \mathbb{R}^N.
\end{align*}
Note that the definition of $R$ is aimed at promoting sparsity both in $\mu$ and $\zeta$.
Moreover, the terms $\int_{\mathbb{T}} \varphi_i(x) \, d\mu(x)$ are the kernel components while $\zeta_i$ are the spike components of the signal.
We define the operator $K:X \rightarrow \mathbb{R}^N$ as  
\begin{align*}
(K(\mu,\zeta))_i :=  \int_{\mathbb{T}} \varphi_i(x) \, d\mu(x) + \zeta_i, \qquad (\mu,\zeta) \in X,\ \  i= 1,\ldots,N.
\end{align*}
Now, we can rewrite problem \eqref{eq:spikes} as 
\begin{align*}
\min_{(\mu,\zeta) \in X} \frac{1}{2}\|K(\mu,\zeta) - y-w\|^2 + \lambda R(\mu,\zeta).
\end{align*}
It can be readily verified that the operator $K$ is weak*-to-strong continuous and the regularizer $R$ is convex, positively $1$-homogeneous and weak* lower-semicontinous with $0$ being an interior point of $\partial R(0)$. Moreover, the extreme points of the ball $B = \{(\mu,\zeta) \in X : R(\mu,\zeta) \leqslant1\}$ can be characterized as in the following proposition (see for example \cite[Section 4.2.3]{boyer2019representer}).
\begin{proposition}
It holds that 
\begin{equation}
{\rm Ext}(B) = E^+_1 \cup E^-_1\cup E_2,
\end{equation}
where
\begin{align*}
E^+_1 := \{ (\delta_x,0) : x \in \mathbb{T}\},\quad E^-_1 := \{ (-\delta_x,0) : x \in \mathbb{T}\}, \quad E_2 := \{(0,ae_k) :  k =1, \ldots, N, a\in \mathbb{R}, |a| =1 \},
\end{align*}
where $e_k$ is the k-th vector of the canonical base of $\mathbb{R}^N$.
\end{proposition}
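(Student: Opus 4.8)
The plan is to exploit that $R$ is the $\ell^1$-direct sum of the total variation norm on $M(\mathbb{T})$ and the $\ell^1$-norm on $\mathbb{R}^N$, so that $\operatorname{Ext}(B)$ decouples into the extreme points of the two separate unit balls, each sitting inside a single factor. Concretely, I would prove the two inclusions separately, invoking the known descriptions $\operatorname{Ext}(B_{M(\mathbb{T})}) = \{\pm\delta_x : x\in\mathbb{T}\}$ of the extreme points of the total variation ball and the elementary fact that $\operatorname{Ext}(B_{\ell^1_N}) = \{a e_k : k=1,\dots,N,\ |a|=1\}$.

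For the inclusion $\operatorname{Ext}(B)\subseteq E_1^+\cup E_1^-\cup E_2$, I would first note that any extreme point satisfies $R(\mu,\zeta)=1$, since a point with $R(\mu,\zeta)<1$ is the midpoint of a nondegenerate segment contained in $B$. Next, I would rule out extreme points carrying mass in both factors: if $\mu\neq 0$ and $\zeta\neq 0$, set $t:=\|\mu\|_{TV}\in(0,1)$, so that $\|\zeta\|_1 = 1-t$, and write
\[
(\mu,\zeta) = t\,\Big(\tfrac{1}{t}\mu,\,0\Big) + (1-t)\,\Big(0,\,\tfrac{1}{1-t}\zeta\Big),
\]
a convex combination of two distinct elements of $B$, contradicting extremality. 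Hence $\zeta=0$ or $\mu=0$. In the first case $\|\mu\|_{TV}=1$ and $\mu\in\operatorname{Ext}(B_{M(\mathbb{T})})$, because any splitting $\mu=\frac12(\mu_1+\mu_2)$ in $B_{M(\mathbb{T})}$ lifts to $(\mu,0)=\frac12((\mu_1,0)+(\mu_2,0))$ in $B$ and extremality forces $\mu_1=\mu_2=\mu$; by the classical characterization of the extreme points of the total variation ball this produces exactly $E_1^+\cup E_1^-$. The case $\mu=0$ is symmetric and yields $E_2$.

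For the reverse inclusion, I would check that each listed point is extreme. Taking $(e,0)$ with $e\in\operatorname{Ext}(B_{M(\mathbb{T})})$ (the $E_2$ case being identical), a decomposition $(e,0)=\frac12((a_1,a_2)+(b_1,b_2))$ with both summands in $B$ forces
\[
1 = \|e\|_{TV} \le \tfrac12\big(\|a_1\|_{TV} + \|b_1\|_{TV}\big) \le \tfrac12\big(R(a_1,a_2)+R(b_1,b_2)\big) \le 1,
\]
so all inequalities are equalities; this yields $a_2=b_2=0$, $\|a_1\|_{TV}=\|b_1\|_{TV}=1$ and $e=\frac12(a_1+b_1)$, whence $a_1=b_1=e$ by extremality of $e$. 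Combining the two inclusions proves the proposition.

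The only ingredient requiring genuine input beyond this $\ell^1$-direct-sum bookkeeping is the identification $\operatorname{Ext}(B_{M(\mathbb{T})})=\{\pm\delta_x\}$ — that Dirac masses are the sole extreme points of the total variation unit ball — which I would quote from the literature (e.g.\ \cite[Section 4.2.3]{boyer2019representer}) rather than reprove. I do not expect a real obstacle; the main care lies in keeping track of which factor carries the mass throughout the case distinction.
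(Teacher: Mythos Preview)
Your argument is correct and is essentially the standard $\ell^1$-direct-sum decoupling that the cited reference \cite[Section~4.2.3]{boyer2019representer} uses; the paper itself does not give a proof but simply defers to that reference, so your write-up is in fact more detailed than what appears in the paper while resting on the same key ingredient.
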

\noindent
It is also straightforward to verify that ${\rm Ext}(B)$ is weak* closed since it is the union of three weak* closed sets, one of which is finite dimensional.
Moreover, a fundamental topological property of this set is that it can be written as the union of isolated components (according to the weak* topology). Such observation is provided in the following remark.

\begin{rem}\label{rem:isolated}
The set $E_2$ is made of isolated points. Moreover, given $\delta_x \in E^+_1$ there exists a ball $B$ in the weak* topology that contains $\delta_x$ and such that $B \cap E^-_1 \cap E_2 = \emptyset$ and similarly given $-\delta_x \in E^-_1$ there exists a ball $B$ in the weak* topology that contains $-\delta_x$ and such that $B \cap E^+_1 \cap E_2 = \emptyset$. 
\end{rem}
\noindent
In the following result we denote by $\eta_0 = (\eta_0^1, \eta_0^2) \in C(\mathbb{T}, \mathbb{R}) \times \mathbb{R}^N$ the minimal-norm dual certificate for \eqref{eq:spikes}, defined as in Section \ref{def:dualcert}. We are now ready to state the exact sparse representation recovery result for the optimization problem \eqref{eq:spikes}.

\begin{theorem}\label{thm:spikes}
    Let $\mu_0 = \sum_{i=1}^n c_0^i (\sigma_0^i \delta_{x_0^i}, 0) + \sum_{j=1}^m d_0^j (0, \zeta_0^j)$ be a linear combination of extreme points, where $c_0^i, d_0^j \in \mathbb{R}\setminus \{0\}$, $\sigma_0^i \in \{-1,+1\}$, $x_0^i \in \mathbb{T}$ and $(0,\zeta_0^j) \in E_2$. Suppose that 
\begin{enumerate}
    \item [a)] ${\rm Im} K_* \cap \partial R(\mu_0) \neq \emptyset$;
    \item[b)]  $\sigma_i \eta^1_0(x) = 1$ if and only if $x = x_0^i$ for $i=1,\ldots,n$, and $\langle \eta_0^2, \zeta \rangle = 1$ if and only if $\zeta = \zeta_0^j$ for $j=1,\ldots,m$;
\item[c)] $\eta_0^1 \in C^2(\mathbb{T})$ is such that $\sigma_i (\eta^1_0)''(x_0^i) < 0$ for $i=1,\ldots,n$.
\end{enumerate}
Additionally, suppose that $\{K(\sigma_i \delta_{x_0^i}, 0), K(0,\xi_0^j)\}_{i,j}$ are linearly independent. Then, for every sufficiently small $\varepsilon >0$ there exist $\alpha >0$ and $\lambda_0>0$ such that, for every $(\lambda,w) \in N_{\alpha,\lambda_0}$, the solution $\tilde \mu_\lambda$ to \eqref{eq:spikes} is unique and it admits a unique representation of the form:
\begin{align}
    \tilde \mu_\lambda = \sum_{i=1}^n \tilde c_\lambda^i (\sigma_i \delta_{\tilde x_\lambda^i}, 0) + \sum_{j=1}^m \tilde  d_\lambda^j (0, \zeta_0^j)  
\end{align}
where $|\tilde x_\lambda^i - x_0^i| \leqslant \varepsilon$, $|\tilde c_\lambda^i -  c_0^i| \leqslant \varepsilon$ and $|\tilde d_\lambda^j - d_0^j| \leqslant \varepsilon$ for $i=1,\ldots, n$ and $j=1,\ldots, m$. 
\end{theorem}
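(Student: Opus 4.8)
This is an application of Theorem~\ref{ESRR} to $X=M(\mathbb{T},\mathbb{R})\times\mathbb{R}^N$, the operator $K$, and $R(\mu,\zeta)=\|\mu\|_{TV}+\|\zeta\|_1$. The abstract hypotheses on $K$ and $R$ have already been recorded, and the adjoint is $K_*q=\big(\sum_{\ell=1}^N q_\ell\varphi_\ell,\,q\big)$; in particular the minimal-norm dual certificate $\eta_0=(\eta_0^1,\eta_0^2)$ of Definition~\ref{def:dualcert} exists by strong duality, with $\eta_0^1=\sum_\ell (p_0)_\ell\varphi_\ell\in C^2(\mathbb{T})$ automatically since $\varphi_\ell\in C^2(\mathbb{T})$. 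The plan is: (i) rewrite $\mu_0$ in the normalised form required by Theorem~\ref{ESRR}; (ii) show that a)--c) are exactly the three conditions of the MNDSC read off on $\operatorname{Ext}(B)$; (iii) invoke Theorem~\ref{ESRR} and translate its conclusion back using Remark~\ref{rem:isolated}. First, absorbing signs, $c_0^i(\sigma_0^i\delta_{x_0^i},0)=|c_0^i|(\sigma_i\delta_{x_0^i},0)$ with $\sigma_i:=\sgn(c_0^i)\sigma_0^i$, and $d_0^j(0,\zeta_0^j)=|d_0^j|(0,\widetilde\zeta_0^j)$ with $(0,\widetilde\zeta_0^j):=(0,\sgn(d_0^j)\zeta_0^j)\in E_2$; relabelling gives $\mu_0=\sum_{i=1}^n c_0^i u_0^i+\sum_{j=1}^m d_0^j v_0^j$ with $c_0^i,d_0^j>0$, $u_0^i:=(\sigma_i\delta_{x_0^i},0)\in E_1^{\pm}$, $v_0^j\in E_2$. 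Since $\operatorname{Ext}(B)$ is weak$^*$ closed, $\mathcal{B}=\operatorname{Ext}(B)$ and $E_2$ is a finite set of isolated points.

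\textbf{Conditions 1)--2).} Evaluating the duality product on extreme points, $\langle\eta_0,(\pm\delta_x,0)\rangle=\pm\eta_0^1(x)$ and $\langle\eta_0,(0,\zeta)\rangle=\langle\eta_0^2,\zeta\rangle$. Hence condition~1) of the MNDSC is hypothesis a). Using that $\eta_0\in\partial R(\mu_0)\subset\partial R(0)$ forces $\|\eta_0^1\|_{C(\mathbb{T})}\le1$ and $\|\eta_0^2\|_\infty\le1$, hypothesis b) — which identifies $\{x\in\mathbb{T}:|\eta_0^1(x)|=1\}$ with $\{x_0^1,\dots,x_0^n\}$ (signs $\sigma_i$) and $\{\zeta\in E_2:\langle\eta_0^2,\zeta\rangle=1\}$ with $\{v_0^1,\dots,v_0^m\}$ — says precisely $\operatorname{Exc}(\mu_0)=\{u\in\mathcal{B}:\langle\eta_0,u\rangle=1\}=\{u_0^1,\dots,u_0^n,v_0^1,\dots,v_0^m\}$, i.e.\ condition~2).

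\textbf{Condition 3) — the crux.} Here the geometry of $\mathcal{B}$ does the work. Each $v_0^j\in E_2$ is isolated in $\mathcal{B}$, so by the remark following Theorem~\ref{ESRR} condition~3) is automatic at $v_0^j$. For a spike point $u_0^i=(\sigma_i\delta_{x_0^i},0)$, Remark~\ref{rem:isolated} ensures that for $\varepsilon$ small $B_\varepsilon(u_0^i)$ meets only the component $E_1^{\sigma_i}$, hence is identified with a short geodesic arc $\{x:d_{\mathbb{T}}(x,x_0^i)\le\rho\}$; connecting curves take the form $\gamma(t)=(\sigma_i\delta_{x(t)},0)$ for a $C^2$ path $x(\cdot)$ inside the arc, with $K\gamma(t)=\sigma_i(\varphi_1(x(t)),\dots,\varphi_N(x(t)))$ of class $C^2$ and second derivative bounded by $\|x\|_{C^2}\max_\ell\|\varphi_\ell\|_{C^2(\mathbb{T})}$, so $\gamma\in\Gamma_M$ for a fixed $M$, while $\langle\eta_0,\gamma(t)\rangle=\sigma_i\eta_0^1(x(t))$. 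Since $x_0^i$ is an interior maximiser of $\sigma_i\eta_0^1$ we get $(\eta_0^1)'(x_0^i)=0$; hypothesis c) gives $\sigma_i(\eta_0^1)''(x_0^i)<0$, and shrinking $\rho$ yields $\sigma_i(\eta_0^1)''\le-c<0$ on the whole arc, so $\sigma_i\eta_0^1$ is strictly concave there with a nondegenerate maximum at $x_0^i$. The remaining task — and the genuine obstacle — is to parametrise $x(\cdot)$ so that $\frac{d^2}{dt^2}\langle\eta_0,\gamma(t)\rangle$ stays below a fixed negative constant along the whole curve: for endpoints at a definite distance a constant-speed path already works, since then $\frac{d^2}{dt^2}\sigma_i\eta_0^1(x(t))=\sigma_i(\eta_0^1)''(x(t))\,|\dot x|^2$; the near-diagonal regime is handled by the curve-selection argument underlying Theorem~\ref{ESRR}, exploiting the uniform concavity just established together with the fact that only distinct cluster points of near-optimal solutions need ever be joined.

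\textbf{Conclusion.} Thus a)--c) give the MNDSC for $\mu_0$ relative to $K$ and $R$, the remaining hypothesis is the linear independence of $\{Ku_0^i,Kv_0^j\}$, and all $n+m$ coefficients are positive, so Theorem~\ref{ESRR} (with its ``$n$'' equal to our $n+m$) yields, for $\varepsilon$ small, constants $\alpha,\lambda_0>0$ such that for $(\lambda,w)\in N_{\alpha,\lambda_0}$ the minimiser $\tilde\mu_\lambda$ of \eqref{eq:spikes} is unique and equals $\sum_i\tilde c_\lambda^i\tilde u_\lambda^i+\sum_j\tilde d_\lambda^j\tilde v_\lambda^j$ with $\tilde u_\lambda^i\in B_\varepsilon(u_0^i)$, $\tilde v_\lambda^j\in B_\varepsilon(v_0^j)$, $\langle\tilde\eta_\lambda,\cdot\rangle=1$ on each, and positive coefficients $\varepsilon$-close to $c_0^i$, resp.\ $d_0^j$. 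Translating back via Remark~\ref{rem:isolated}: for $\varepsilon$ small $\tilde u_\lambda^i=(\sigma_i\delta_{\tilde x_\lambda^i},0)$ with $d_{\mathbb{T}}(\tilde x_\lambda^i,x_0^i)\le\varepsilon$ (after absorbing a modulus of continuity into $\varepsilon$, using that $\delta_{y_n}\weakstar\delta_y$ iff $y_n\to y$), while $\tilde v_\lambda^j$ lies in a ball around the \emph{isolated} point $v_0^j$, forcing $\tilde v_\lambda^j=v_0^j=(0,\zeta_0^j)$ exactly — which is why the group components $\zeta_0^j$ are recovered without perturbation and only $\tilde d_\lambda^j$ moves. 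This produces the asserted representation of $\tilde\mu_\lambda$ with the stated bounds.
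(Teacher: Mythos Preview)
Your proof follows the paper's route: verify that a)--c) give the three ingredients of the MNDSC and then invoke Theorem~\ref{ESRR}. The normalisation step, the identification of conditions 1) and 2), the treatment of the isolated points of $E_2$, and the final back-translation using Remark~\ref{rem:isolated} all match the paper (indeed, your write-up of these points is more detailed than the paper's).

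The one place where you diverge is the verification of condition~3) at the spike points $(\sigma_i\delta_{x_0^i},0)$. The paper does not consider general $C^2$ paths $x(\cdot)$: it takes the explicit affine interpolation
\[
\gamma(t)=(\sigma\delta_{tx_2+(1-t)x_1},0),
\]
computes
\[
\frac{d^2}{dt^2}\langle\eta_0,\gamma(t)\rangle=\sigma(\eta_0^1)''\bigl(tx_2+(1-t)x_1\bigr)(x_2-x_1)^2,
\]
and, using c) and continuity of $(\eta_0^1)''$ on a small arc, declares this $<-\delta$. You instead flag the ``near-diagonal'' issue (that $(x_2-x_1)^2$ kills any uniform $\delta$ as $x_1\to x_2$) and then resolve it by appealing to ``the curve-selection argument underlying Theorem~\ref{ESRR}'' and to ``only distinct cluster points \dots need ever be joined''. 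That is not a self-contained step: you are reaching inside the proof of Theorem~\ref{ESRR} in \cite{carioni2023general}, which is neither reproduced here nor part of the hypotheses you may invoke. Either the uniformity of $\delta$ is part of the MNDSC as stated---in which case your hand-wave does not discharge it---or it is not actually required (as the paper's own proof tacitly assumes, presumably relying on the precise formulation in \cite{carioni2023general}); in neither case is your paragraph an argument. Replace it with the paper's direct computation using the affine path, and if you wish to record the uniformity concern, do so as a remark rather than as a load-bearing step.
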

\begin{proof}
    The proof follows from the application of Theorem \ref{ESRR}. In particular, we note that assumptions $a)$ and $b)$ correspond to $1)$ and $2)$ of the MNDSC. 
    Now, we show that $c)$ implies $3)$. The extreme points in the representation of $\mu_0$ are of the form either $(\sigma_i \delta_{x_0^i}, 0)$ or $(0, \zeta_0^j)$. For simplicity, let us call $(\sigma \delta_{x_0}, 0)$ an extreme point of the first type and $(0, \zeta_0)$ an extreme point of the second type. If $\varepsilon$ is sufficiently small, due to Remark \ref{rem:isolated}, we can ensure that there are no other extreme points in a ball $B_\varepsilon(0,  \zeta_0)$. In particular, we do not have to verify condition $3)$ in the MNDSC for such extreme points.
    Similarly, again due to Remark \ref{rem:isolated}, all extreme points in a ball $B_\varepsilon(\sigma \delta_{x_0}, 0)$ are of the form $(\sigma \delta_{x}, 0)$, where $|x-x_0| \leqslant \varepsilon$. 
    Given two extreme points in such a ball, denoted by $(\sigma \delta_{x_1}, 0)$ and $(\sigma \delta_{x_2}, 0)$, we can construct a curve $\gamma : [0,1] \rightarrow B_\varepsilon(\sigma \delta_{x_0}, 0)$ such that $\gamma(0) =  (\sigma \delta_{x_1}, 0)$ and $\gamma(1) =  (\sigma \delta_{x_2}, 0)$ as
    \begin{align}
        \gamma(t) = (\sigma \delta_{tx_2 + (1-t)x_1}, 0).
    \end{align}
Therefore, 
\begin{align}
    \langle \eta_0, \gamma(t) \rangle = \sigma \eta_0^1(tx_2 + (1-t)x_1).
\end{align}
By choosing $\varepsilon$ small enough and applying $c)$, there exists $\delta >0$ such that for all $t \in (0,1)$, the following holds: 
\begin{align}
    \frac{d^2}{dt^2} \langle \eta_0, \gamma(t) \rangle = \sigma\frac{d^2\eta^1_0}{dt^2}(tx_2 + (1-t)x_1)(x_2 - x_1)^2 < -\delta. 
\end{align}
This shows that $3)$ in the MNDSC holds. We can thus apply Theorem \ref{ESRR} and conclude the proof.
\end{proof}
\begin{rem}
   Note that the condition $b)$ in Theorem \ref{thm:spikes} involving $\eta_0^2$ is simply the classical sufficient condition for identifiability in LASSO.  
   Moreover, in condition $c)$, only $\eta_0^1$ is considered. This, as anticipated in the previous section, is due to the fact that the extreme points of the set $E_2$ are isolated (in the weak* topology), which makes them already satisfying condition $3)$ of the MNDSC. 
\end{rem}

\section{Group BLASSO}

As a second application we consider the problem of reconstructing a vector valued measure on the torus from a finite number of noisy convolution measurements. The ESRR for TV-regularized vector measures has been already considered in \cite{golbabaee2022off}. Here, we show that Theorem \ref{ESRR} applies straightforwardly to this setting and it allows to prove ESRR under a slightly different non-degeneracy condition than \cite{golbabaee2022off}.
\\
We denote the space of such measures $X= M(\mathbb{T}; \mathbb{R}^d)$. Note that the predual space of $X$ is $X_* = C(\mathbb{T}; \mathbb{R}^d)$. We define the following regularized optimization problem:
\begin{align}\label{eq:group}
\min_{\mu\in X} \frac{1}{2}\sum_{i=1}^N \Big|\int_{\mathbb{T}} \varphi_i(x) \cdot d\mu(x) - (y_i+w_i)\Big|^2 + \lambda R(\mu),
\end{align}
where $y =(y_1, \ldots,y_N) \in \mathbb{R}^N$ are the measurements, $w =(w_1, \ldots,w_N) \in \mathbb{R}^N$ is the noise, $\varphi_i \in C^2(\mathbb{T}, \mathbb{R}^d)$ are the kernels, and the regularizer $R$ is defined as 
\begin{align}
    R(\mu) = \|\mu\|_{TV} = \sup\left\{ \int_{\mathbb{T}} \psi(x)\cdot \, d\mu(x) : \psi \in C(\mathbb{T}; \mathbb{R}^d),\ \sup_{x\in\mathbb{T}} \|\psi(x)\|_2 \leqslant1\right\}.
\end{align}
Note that $R$ is the classical total variation norm of vector measures, where we endowed $\mathbb{R}^d$ with the $2$-norm.
If we define the operator $K:X \rightarrow \mathbb{R}^N$ as  
\begin{align*}
(K\mu)_i =  \int_{\mathbb{T}} \varphi_i(x) d\mu(x), \qquad \mu \in X,\ \  i= 1,\ldots,N,
\end{align*}
then we can rewrite the optimization problem \eqref{eq:group} as 
\begin{align*}
\min_{\mu \in X} \frac{1}{2}\|K\mu - y-w\|^2 + \lambda R(\mu).
\end{align*}
Also in this scenario it can be readily verified that the operator $K$ is weak*-to-strong continuous and the regularizer $R$ is convex, positively $1$-homogeneous, weak* lower-semicontinous and $0$ is an interior point of $\partial R(0)$. 
The extreme points of the ball $B = \{\mu \in X : R(\mu) \leqslant1\}$ is a weak* closed set and it can be characterized as follows (see for instance \cite[Section 4.2.3]{boyer2019representer}).
\begin{proposition}
It holds that 
\begin{equation}
{\rm Ext}(B) = \{a \delta_x: x\in \mathbb{T}, a \in \mathbb{R}^d,\, \|a\|_2 = 1\}.
\end{equation}
\end{proposition}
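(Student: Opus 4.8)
The plan is to establish the two inclusions separately, relying on three facts: the total variation measure $|\mu|$ is additive over disjoint Borel sets, $\|a\delta_x\|_{TV}=\|a\|_2$, and the Euclidean norm on $\mathbb{R}^d$ is strictly convex.

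First I would check that every $a\delta_x$ with $\|a\|_2=1$ is extreme. Suppose $a\delta_x=\tfrac12(\mu_1+\mu_2)$ with $\mu_1,\mu_2\in B$. From $1=\|a\delta_x\|_{TV}\le\tfrac12\|\mu_1\|_{TV}+\tfrac12\|\mu_2\|_{TV}\le 1$ both $\mu_i$ have unit norm. Writing $\mu_i=a_i\delta_x+\nu_i$ with $\nu_i:=\mu_i\restrict(\mathbb{T}\setminus\{x\})$ and $a_i\delta_x=\mu_i\restrict\{x\}$, the identity $\mu_1+\mu_2=2a\delta_x$ forces $\nu_1=-\nu_2$, hence $|\nu_1|(\mathbb{T})=|\nu_2|(\mathbb{T})=:s$ and, by additivity, $\|a_i\|_2+s=1$ for $i=1,2$. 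Then $2=\|a_1+a_2\|_2\le\|a_1\|_2+\|a_2\|_2=2(1-s)$ gives $s=0$, so $\nu_1=\nu_2=0$ and $\|a_1\|_2=\|a_2\|_2=1$; applying strict convexity of $\|\cdot\|_2$ to $a=\tfrac12(a_1+a_2)$, which then also has unit norm, yields $a_1=a_2=a$, so $\mu_1=\mu_2=a\delta_x$ and $a\delta_x$ is extreme.

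For the reverse inclusion, let $\mu\in{\rm Ext}(B)$. One first shows $\|\mu\|_{TV}=1$: if $\mu=0$ then $\mu=\tfrac12(\varepsilon\nu+(-\varepsilon\nu))$ for a small nonzero $\nu\in B$, and if $0<\|\mu\|_{TV}<1$ then $\mu=\tfrac12((1+\varepsilon)\mu+(1-\varepsilon)\mu)$ with the two terms distinct and in $B$ for small $\varepsilon$; in both cases $\mu$ is not extreme, a contradiction. Next, if $\mu$ is not concentrated at a single point, then $|\mu|$ is not a point mass, so there is a Borel set $E$ with $0<t:=|\mu|(E)<1$ — split off an atom of mass $<1$ if $\mu$ has one, otherwise use non-atomicity of $|\mu|$. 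Choosing $\varepsilon,\varepsilon'>0$ small with $\varepsilon t=\varepsilon'(1-t)$ and setting
$$\mu_{1,2}:=(1\pm\varepsilon)\,\mu\restrict E+(1\mp\varepsilon')\,\mu\restrict(\mathbb{T}\setminus E),$$
additivity of $|\cdot|$ over $E$ and $\mathbb{T}\setminus E$ gives $\|\mu_{1,2}\|_{TV}=(1\pm\varepsilon)t+(1\mp\varepsilon')(1-t)=1$, while $\tfrac12(\mu_1+\mu_2)=\mu$ and $\mu_1\ne\mu_2$ since $\mu\restrict E\ne 0$; hence $\mu$ is again not extreme. Therefore $\mu=a\delta_x$ and necessarily $\|a\|_2=\|\mu\|_{TV}=1$. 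The weak* closedness invoked before the statement follows separately, since the right-hand side is the image of the compact set $\mathbb{T}\times\{a\in\mathbb{R}^d:\|a\|_2=1\}$ under the weak*-continuous map $(x,a)\mapsto a\delta_x$.

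The only steps that are not pure bookkeeping are, in the first inclusion, the use of strict convexity of the Euclidean norm — this is exactly where the Hilbertian choice of the $2$-norm on $\mathbb{R}^d$ enters, and the statement would be false for the $1$- or $\infty$-norm — and, in the second inclusion, producing the Borel set $E$ with $0<|\mu|(E)<1$ and verifying that the perturbed measures remain in $B$. I expect this latter measure-theoretic point to be the main, though rather mild, obstacle; everything else follows from additivity of the total variation measure over disjoint sets.
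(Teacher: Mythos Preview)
Your argument is correct in both directions. The forward inclusion uses strict convexity of the Euclidean ball exactly where it is needed, and the reverse inclusion via the splitting $\mu_{1,2}=(1\pm\varepsilon)\mu\restrict E+(1\mp\varepsilon')\mu\restrict(\mathbb{T}\setminus E)$ with $\varepsilon t=\varepsilon'(1-t)$ is the standard and clean way to show that a non-Dirac unit-norm measure cannot be extreme; the case distinction ``atom of mass $<1$ versus non-atomic $|\mu|$'' to produce $E$ is also fine. The remark on weak* closedness and the comment that strict convexity is essential (so the result fails for the $1$- or $\infty$-norm, in line with the paper's closing remark on ${\rm Ext}(B_1)$) are both accurate.

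As for comparison: the paper does not actually prove this proposition --- it only states it and refers to \cite[Section~4.2.3]{boyer2019representer}. Your write-up therefore supplies a self-contained argument where the paper gives none. The route you take is the classical one (total-variation additivity plus strict convexity of the fibre norm), which is essentially what the cited reference does as well, so there is no methodological divergence to discuss.

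Two minor cosmetic points: in the $\mu=0$ case you can drop the $\varepsilon$ altogether and simply write $0=\tfrac12(\nu+(-\nu))$ for any nonzero $\nu\in B$; and in the forward inclusion you implicitly use that $|\mu_i|(\{x\})=\|a_i\|_2$, which is immediate but worth stating once since it is the only place where the specific choice of the $2$-norm on $\mathbb{R}^d$ enters the total-variation computation.
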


\begin{theorem}\label{thm:spikes2}
    Let $\mu_0 = \sum_{i=1}^n c_0^i a_0^i \delta_{x_0^i}$ be a linear combination of extreme points, where $c_0^i \in \mathbb{R}\setminus \{0\}$, $a_0^i \in \mathbb{R}^d$ with $\|a_0^i\|_2 =1$ and $x_0^i \in \mathbb{T}$. Suppose that 
\begin{enumerate}
    \item [a)] ${\rm Im} K_* \cap \partial R(\mu_0) \neq \emptyset$;
    \item[b)]  $\langle \eta_0(x), a\rangle = 1$ if and only if $x = x_0^i$, $a = a_0^i$ for $i=1,\ldots,n$;
\item[c)] $\eta_0 \in C^2(\mathbb{T})$ is such that $\langle \eta''_0(x_0^i), a_0^i\rangle < 0$ for $i=1,\ldots,n$.
\end{enumerate}
Additionally, suppose that $\{K(a_0^i\delta_{x_0^i}))\}_{i=1}^n$ are linearly independent. Then, for every sufficiently small $\varepsilon >0$ there exist $\alpha >0$ and $\lambda_0>0$ such that, for every $(\lambda,w) \in N_{\alpha,\lambda_0}$, the solution $\tilde \mu_\lambda$ to \eqref{eq:group} is unique and it admits a unique representation of the form:
\begin{align}
    \tilde \mu_\lambda = \sum_{i=1}^n \tilde c_\lambda^i \tilde a^i_\lambda \delta_{\tilde x_\lambda^i} \quad \tilde x_\lambda^i \in \mathbb{T},\ \|\tilde a^i_\lambda\|_2 = 1,\, c_\lambda^i>0,  
\end{align}
where $|\tilde x_\lambda^i - x_0^i| \leqslant \varepsilon$, $\|\tilde a_\lambda^i - a_0^i\|_2 \leqslant \varepsilon$, and $|\tilde c_\lambda^i - c_0^i| \leqslant \varepsilon$ for $i=1,\ldots, n$.
\end{theorem}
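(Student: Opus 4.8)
The plan is to reduce Theorem~\ref{thm:spikes2} to an application of Theorem~\ref{ESRR}, exactly in the spirit of the proof of Theorem~\ref{thm:spikes}. The assumptions $a)$ and $b)$ are precisely conditions $1)$ and $2)$ of the MNDSC (with $\langle\eta_0, a\delta_x\rangle = \langle\eta_0(x),a\rangle$ for the extreme point $a\delta_x$), and the linear independence hypothesis matches the one in Theorem~\ref{ESRR}. The positivity $c_0^i>0$ can be arranged by absorbing the sign into $a_0^i$, since $\|a_0^i\|_2=1$ is preserved under $a_0^i\mapsto -a_0^i$. Hence the only real work is to show that $c)$ implies condition $3)$ of the MNDSC.

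For this I would fix an extreme point $a_0^i\delta_{x_0^i}$ in the representation of $\mu_0$ and work inside a small metric ball $B_\varepsilon(a_0^i\delta_{x_0^i})$ in $\mathcal{B} = \overline{\mathrm{Ext}(B)}^* = \{a\delta_x : x\in\mathbb{T},\|a\|_2=1\}$ (the set is already weak* closed). Note that on this set the weak* metric controls jointly the location $x$ and the amplitude $a$; for $\varepsilon$ small every element of $B_\varepsilon(a_0^i\delta_{x_0^i})$ is of the form $a\delta_x$ with $|x-x_0^i|$ and $\|a-a_0^i\|_2$ small. Given two such points $a_1\delta_{x_1}$ and $a_2\delta_{x_2}$, I would connect them by the curve $\gamma(t) = a(t)\delta_{x(t)}$ where $x(t) = (1-t)x_1 + tx_2$ and $a(t)$ is the normalized geodesic (spherical interpolation) on the unit sphere $\{\|a\|_2=1\}$ from $a_1$ to $a_2$ — this stays inside $B_\varepsilon(a_0^i\delta_{x_0^i})$ for $\varepsilon$ small and, since $x(t)$ is affine and $a(t)$ is smooth, $t\mapsto K(\gamma(t)) = \big(\int\varphi_j(x(t))\,d(a(t)\delta)\big)_j = (\varphi_j(x(t))\cdot a(t))_j$ is $C^2$ with second derivative uniformly bounded by some $M$ depending only on the $C^2$ norms of the $\varphi_j$ and on $\mathbb{T}$; thus $\gamma\in\Gamma_M$.

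Then I would compute $\langle\eta_0,\gamma(t)\rangle = \langle\eta_0(x(t)),a(t)\rangle$ and differentiate twice in $t$, using $\eta_0\in C^2(\mathbb{T})$ from hypothesis $c)$. At $t$-values where $x(t)$ is near $x_0^i$ and $a(t)$ near $a_0^i$, the dominant term is $\langle\eta_0''(x(t)),a(t)\rangle\,\dot x(t)^2$, which by continuity and $c)$ is bounded above by $\tfrac12\langle\eta_0''(x_0^i),a_0^i\rangle\,(x_2-x_1)^2 < 0$ once $\varepsilon$ is small; the remaining terms involve $\ddot a(t)$, $\dot a(t)$, and first derivatives of $\eta_0$, all of which are small because $a(t)$ stays in an $O(\varepsilon)$-neighbourhood of the constant $a_0^i$ on the sphere (so $\dot a, \ddot a = O(\varepsilon)$ after suitable parametrization) and because $\langle\eta_0(x_0^i),\cdot\rangle$ attains its max $1$ at $a_0^i$, forcing the first-order spherical term to be controllable. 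Collecting these, one obtains a uniform $\delta>0$ with $\tfrac{d^2}{dt^2}\langle\eta_0,\gamma(t)\rangle < -\delta$ on $(0,1)$, which is condition $3)$. Applying Theorem~\ref{ESRR} then yields the claimed unique sparse representation $\tilde\mu_\lambda = \sum_i \tilde c_\lambda^i\tilde a_\lambda^i\delta_{\tilde x_\lambda^i}$ with the stated $\varepsilon$-closeness.

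\textbf{Main obstacle.} The delicate point, compared with Theorem~\ref{thm:spikes} where the extreme points in the relevant ball vary in a \emph{single} scalar parameter $x$, is that here the amplitude $a$ also moves, on the unit sphere $S^{d-1}$. One must verify that the spherical-interpolation curve genuinely lies in $B_\varepsilon(a_0^i\delta_{x_0^i})$, has uniformly bounded $\|\tfrac{d^2}{dt^2}K(\gamma(t))\|$, and — crucially — that the extra terms in $\tfrac{d^2}{dt^2}\langle\eta_0(x(t)),a(t)\rangle$ coming from the curvature of the sphere do not destroy the strict negativity supplied by $c)$. This is exactly where the remark after Theorem~\ref{ESRR} is used: since $c)$ is a strict inequality at $x_0^i$, a continuity/smallness argument in $\varepsilon$ closes the gap, but the bookkeeping of the sphere-direction derivatives is the step requiring care.
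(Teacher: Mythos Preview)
Your overall strategy is exactly the paper's: reduce to Theorem~\ref{ESRR}, identify $a)$ and $b)$ with conditions $1)$ and $2)$ of the MNDSC, and verify $3)$ by connecting two nearby extreme points $a_1\delta_{x_1}$, $a_2\delta_{x_2}$ via an explicit curve in $\mathcal{B}$ and differentiating $\langle\eta_0,\gamma(t)\rangle$ twice. The one cosmetic difference is the curve: the paper interpolates the amplitude by the \emph{normalized chord} $a(t)=\dfrac{ta_2+(1-t)a_1}{\|ta_2+(1-t)a_1\|_2}$ rather than the spherical geodesic, and then writes out the first and second $t$-derivatives of $\langle\eta_0,\gamma(t)\rangle$ explicitly before invoking $c)$ and continuity.

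There is, however, one genuine looseness in your argument that the paper avoids by computing explicitly. You call $\langle\eta_0''(x(t)),a(t)\rangle\,\dot x^2$ the ``dominant term'' and treat the contributions involving $\dot a,\ddot a$ as small $O(\varepsilon)$ corrections. But $\dot x=x_2-x_1$ is itself $O(\varepsilon)$, so the ``dominant'' term is $O(\varepsilon^2)$, exactly the same order as the cross term $2\langle\eta_0'(x(t)),\dot a(t)\rangle\dot x$ and the purely spherical term $\langle\eta_0(x(t)),\ddot a(t)\rangle$. In particular, when $x_1=x_2$ your dominant term vanishes identically, and the required strict negativity must come \emph{entirely} from the $a$-variation: for the geodesic one has $\ddot a=-\theta^2 a(t)$ and hence $\langle\eta_0(x_0^i),\ddot a(t)\rangle\approx-\theta^2\langle a_0^i,a_0^i\rangle=-\theta^2<0$, using that condition $b)$ forces $\eta_0(x_0^i)=a_0^i$. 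So the spherical term is not an error to be controlled but a second, independent source of concavity; the full second derivative is (to leading order) a quadratic form in $(\Delta x,\theta)$ whose negativity uses both $c)$ and $b)$ simultaneously, together with the consequence $\langle\eta_0'(x_0^i),a_0^i\rangle=0$ of $\|\eta_0(x)\|_2\le1$. Once you replace the dominance heuristic by this explicit bookkeeping---as the paper does with its displayed formula for $\tfrac{d^2}{dt^2}\langle\eta_0,\gamma(t)\rangle$---your proof goes through.
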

\begin{proof}
 The proof follows again from the application of Theorem \ref{ESRR}. In particular, we note that assumptions $a)$ and $b)$ correspond to $1)$ and $2)$ of the MNDSC. Now, we show that $c)$ implies $3)$. Consider any extreme point in the representation of $\mu_0$ and denote it by 
    $a_0 \delta_{x_0}$. If $\varepsilon$ is sufficiently small, we can ensure that all extreme points in a ball $B_\varepsilon(a_0 \delta_{x_0})$ are of the form $a\delta_{x}$, where $|x-x_0| \leqslant \varepsilon$ and $\|a-a_0\|_2 \leqslant \varepsilon$.  Given two extreme points in such a ball, denoted by $a_1 \delta_{x_1}$ and $a_2 \delta_{x_2}$, we can construct a curve $\gamma : [0,1] \rightarrow B_\varepsilon(a_0 \delta_{x_0})$ such that $\gamma(0) =  a_1 \delta_{x_1}$ and $\gamma(1) = a_2 \delta_{x_2}$ as
    \begin{align}
        \gamma(t) = \frac{ta_2 + (1-t) a_1}{\|ta_2 + (1-t) a_1\|_2}\delta_{tx_2 + (1-t)x_1}.
    \end{align}
Therefore, 
\begin{align}
    \langle \eta_0, \gamma(t) \rangle = \langle \frac{ta_2 + (1-t) a_1}{\|ta_2 + (1-t) a_1\|_2}, \eta_0(tx_2 + (1-t)x_1)\rangle.
\end{align}
We now compute the second derivative of $t \mapsto \langle \eta_0, \gamma(t) \rangle$, where for simplicity we set $a_t = ta_2 + (1-t) a_1$ and $x_t = tx_2 + (1-t)x_1$. Note that $\langle \eta_0, \gamma(t) \rangle$ is twice differentiable, since by choosing $\varepsilon$ small enough, it holds that $\|a_t\|_2 \neq 0$ for all $t \in (0,1)$ and $\eta_0 \in C^2(\mathbb{T})$ by assumption. The first derivative and the second derivative are equal to 
\begin{align}
    \frac{d}{dt} \langle \eta_0, \gamma(t) \rangle = \langle \frac{S_t}{\|a_t\|_2^3}, \eta_0(x_t)\rangle + \langle \frac{a_t}{\|a_t\|_2}, \eta_0'(x_t)(x_2 - x_1)\rangle,
\end{align}
\begin{align*}
    & \frac{d^2}{dt^2} \langle \eta_0, \gamma(t) \rangle = 2 \langle \frac{S_t}{\|a_t\|_2^3}, \eta'_0(x_t)(x_2 - x_1)\rangle + \langle \frac{a_t}{\|a_t\|_2}, \eta_0''(x_t)(x_2 - x_1)^2\rangle\\
    & + \frac{\langle a_t, a_2 - a_1\rangle (a_2 - a_1) - \|a_2 - a_1\|_2^2 a_t - 3\|a_t\|_2 \langle a_t,a_2 - a_1\rangle S_t}{\|a_t\|_2^6},
\end{align*}
where $S_t = \|a_t\|_2^2(a_2 - a_1) - \langle a_t, a_2 - a_1\rangle a_t$. Therefore, by choosing $\varepsilon$ small enough and applying $c)$, there exists $\delta > 0$  such that  
$\frac{d^2}{d t^2}\left\langle\eta_0, \gamma(t)\right\rangle< -\delta$ for all $t\in (0,1)$.
This shows that $3)$ in the MNDSC holds. We can thus apply Theorem \ref{ESRR} and conclude the proof.
\end{proof}

\begin{rem}
An even simpler argument leads to ESRR if instead of the 2-norm, we consider the 1-norm. In this case, the extreme points can be characterized as follows:
\begin{equation}
{\rm Ext}(B_1) = \{ae_k \delta_x: x\in \mathbb{T}, k = 1 \ldots, d, a\in\mathbb{R}, |a| = 1\},
\end{equation}
where $e_k$ is the k-th vector of the canonical base of $\mathbb{R}^d$. In this case the set ${\rm Ext}(B_1)$ can be decomposed in sets $E_{a,k} = \{ae_k\delta_x : x \in \mathbb{T}\}$
that are well-separated according to the weak* topology, meaning that for each point in $E_{a,k}$ there exists a weak* ball that does not intersect the other sets. Therefore, a curve in ${\rm Ext}(B_1)$ connecting any pair of extreme points in a sufficiently small weak* ball can be just constructed as $a_0 \delta_{tx_2 + (1-t)x_1}$, for a constant coefficient $a_0$. As a consequence, a simple computation, involving only the derivative of the locations, shows that a similar MNDSC to the one of Theorem \ref{thm:spikes2} leads to ESRR.
\end{rem}

 \bibliographystyle{plain}
 \bibliography{pamm-tpl.bib}

\begin{thebibliography}{1}

\bibitem{boyer2019representer}
Claire Boyer, Antonin Chambolle, Yohann~De Castro, Vincent Duval,
  Fr{\'e}d{\'e}ric De~Gournay, and Pierre Weiss.
\newblock On representer theorems and convex regularization.
\newblock {\em SIAM Journal on Optimization}, 29(2):1260--1281, 2019.

\bibitem{bredies2020sparsity}
Kristian Bredies and Marcello Carioni.
\newblock Sparsity of solutions for variational inverse problems with
  finite-dimensional data.
\newblock {\em Calculus of Variations and Partial Differential Equations},
  59(1):14, 2020.

\bibitem{bredies2023asymptotic}
Kristian Bredies, Marcello Carioni, Silvio Fanzon, and Daniel Walter.
\newblock Asymptotic linear convergence of fully-corrective generalized
  conditional gradient methods.
\newblock {\em Mathematical Programming}, 205(1):135--202, 2024.

\bibitem{candes2006robust}
Emmanuel~J Cand{\`e}s, Justin Romberg, and Terence Tao.
\newblock Robust uncertainty principles: Exact signal reconstruction from
  highly incomplete frequency information.
\newblock {\em IEEE Transactions on information theory}, 52(2):489--509, 2006.

\bibitem{carioni2023general}
Marcello Carioni and Leonardo Del~Grande.
\newblock A general theory for exact sparse representation recovery in convex
  optimization.
\newblock {\em arXiv preprint arXiv:2311.08072}, 2023.

\bibitem{de2023exact}
Yohann De~Castro, Vincent Duval, and Romain Petit.
\newblock Exact recovery of the support of piecewise constant images via total
  variation regularization.
\newblock {\em arXiv preprint arXiv:2307.03709}, 2023.

\bibitem{duval2015exact}
Vincent Duval and Gabriel Peyr{\'e}.
\newblock Exact support recovery for sparse spikes deconvolution.
\newblock {\em Foundations of Computational Mathematics}, 15(5):1315--1355,
  2015.

\bibitem{fernandez2016super}
Carlos Fernandez-Granda.
\newblock Super-resolution of point sources via convex programming.
\newblock {\em Information and Inference: A Journal of the IMA}, 5(3):251--303,
  2016.

\bibitem{golbabaee2022off}
Mohammad Golbabaee and Clarice Poon.
\newblock An off-the-grid approach to multi-compartment magnetic resonance
  fingerprinting.
\newblock {\em Inverse Problems}, 38(8):085002, 2022.

\end{thebibliography}

\end{document}